\documentclass[11pt,a4paper]{amsart}
\usepackage{amssymb,amsmath,amsthm}

\newtheorem*{theorem}{Theorem}

\newtheorem{lemma}{Lemma}
\newtheorem{conj}{Conjecture}
\newtheorem{prop}{Proposition}

\newtheorem{defi}[]{Definition}

\renewcommand{\geq}{\geqslant}
\renewcommand{\leq}{\leqslant}
\renewcommand{\mod}{\, \textrm{mod }}

\newcommand{\C}{\mathbb{C}}
\newcommand{\R}{\mathbb{R}}
\newcommand{\eps}{\varepsilon}

\numberwithin{equation}{section}

\begin{document}

\title{Chebyshev constants for the unit circle}

\author{Gergely Ambrus}
\email[G. Ambrus]{ambrus@renyi.hu}
\address{Alfr\'ed R\'enyi Institute of Mathematics, Hungarian Academy of Sciences, PO Box 127, 1364 Budapest, Hungary}
\thanks{The research of the first named author was supported by OTKA grants 75016 and
76099, and by the UCL Graduate School Research Scholarship.}

\author{Keith M. Ball}
\email[K. Ball]{kmb@math.ucl.ac.uk}
\address{Department of Mathematics, University College London, Gower Street, London WC1E 6BT, United Kingdom}

\author{Tam\'as Erd\'elyi}
\email[T. Erd\'elyi]{erdelyi@math.tamu.edu}
\address{Department of Mathematics, Texas A\&M University, College Station, Texas 77843, U.S.A}

\begin{abstract}
It is proven that for any system of $n$ points $z_1, \dots, z_n$ on the
(complex) unit circle, there exists another point $z$ of norm 1, such that
\[
\sum \frac{1}{|z_k-z|^2} \leq \frac{n^2}{4}.
\]
Two proofs are presented: one uses a characterisation of equioscillating
rational functions, while the other is based on Bernstein's inequality.

\end{abstract}

\subjclass[2000]{Primary 30C15, 52A40}

\maketitle

\section{Introduction and results}\label{introd}

Chebyshev constants were defined by Fekete \cite{Fe23} and P\'olya and
Szeg\H o~\cite{PSz}. Since then the notion became  fundamental in classical
potential theory. There are several different definitions available, among
which the following is the most suitable for our purposes.

\begin{defi}
Let $K$ be a compact set in a normed space $(X,\|.\|)$. For a given point set
$(x_k)_1^m$ on $K$, let
\[
M^p(x_1, \dots, x_n) = \min_{x \in K} \sum_{k=1}^n \frac{1}{\|x-x_k\|^p}
\]
for $p > 0$, and
\[
M^0(x_1, \dots, x_n) = \min_{x \in K} \prod_{k=1}^n \frac{1}{\|x-x_k\|}\, .
\]
The $n$th $L_p$ Chebyshev constant of $K$ is then given by
\[
M_n^p(K) = \max_{x_1, \dots, x_n \in K} M^p(x_1, \dots, x_n).
\]
\end{defi}
Note that $M_n^0(K)$ is the reciprocal of the usual (modified) $n$th Chebyshev
constant of $K$, cf. page 39 of \cite{BE}. The sum $\sum \|x-x_k\|^{-p}$ is the
Riesz potential of order $(2-p)$ of the discrete distribution with the $x_j$'s
as atoms of weight 1, see  \cite{La}.

Throughout the article, the role of $K$ will be played by the complex unit
circle $T$ endowed with the natural norm. It is natural to expect that in this
situation, the point sets maximising $M^p(z_1, \dots, z_n)$ are equally
distributed on the circle. For $M_n^0(T)$, this assertion is well known, and
easy to obtain by applying the method for proving  the extremality of Chebyshev
polynomials. We illustrate a proof that is parallel to the subsequent
arguments; note that this proof is not unique, see e.g. Theorem 2 of
\cite{ARe}.

\begin{prop}
For any positive integer $n$, $M_n^0(T)=1/2$.
\end{prop}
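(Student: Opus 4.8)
The plan is to reduce the statement to a Chebyshev-type extremal problem for trigonometric polynomials. By the definition, $M_n^0(T) = \max_{z_1,\dots,z_n\in T}\min_{z\in T}\prod_{k=1}^n|z-z_k|^{-1} = \bigl(\min_{z_1,\dots,z_n\in T}\max_{z\in T}\prod_{k=1}^n|z-z_k|\bigr)^{-1}$, so it suffices to show that $\min_{z_k}\max_z\prod_k|z-z_k| = 2$. The upper bound on this quantity (equivalently $M_n^0(T)\geq 1/2$) is witnessed by the $n$-th roots of unity: then $\prod_k(z-z_k)=z^n-1$, and $\max_{z\in T}|z^n-1| = \max_{|w|=1}|w-1| = 2$.

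For the lower bound $\max_{z\in T}\prod_{k}|z-z_k|\geq 2$, I would write $z_k=e^{i\phi_k}$ and use $|e^{i\theta}-e^{i\phi}| = 2|\sin\tfrac{\theta-\phi}{2}|$ to get $\prod_k|e^{i\theta}-e^{i\phi_k}| = 2^n\,|f(\theta/2)|$, where $f(t):=\prod_{k=1}^n\sin\bigl(t-\tfrac{\phi_k}{2}\bigr)$ is a real $2\pi$-periodic trigonometric polynomial. Expanding each factor via $\sin\alpha=(e^{i\alpha}-e^{-i\alpha})/(2i)$ shows that its leading Fourier coefficient is $\widehat f(n)=(2i)^{-n}\exp\bigl(-\tfrac{i}{2}\sum_k\phi_k\bigr)$, which is nonzero; hence $f$ has degree exactly $n$ and the amplitude of its top harmonic is $2|\widehat f(n)| = 2^{1-n}$. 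I would then invoke the Chebyshev-type fact that every real trigonometric polynomial of degree $n$ with leading harmonic $A\cos(nt+\gamma)$ satisfies $\|f\|_\infty\geq A$; applied with $A=2^{1-n}$ this gives $\max_{z\in T}\prod_k|z-z_k| = 2^n\|f\|_\infty\geq 2$, and combining with the previous paragraph yields $M_n^0(T)=1/2$.

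The one point that needs genuine work is this Chebyshev-type inequality (the rest being routine). I would prove it by a discrete equioscillation argument: sampling $f$ at the $2n$ points $t_j=(j\pi-\gamma)/n$, $j=0,\dots,2n-1$, the alternating sum $\sum_{j}(-1)^j f(t_j)$ kills every harmonic $e^{imt}$ with $|m|<n$ by a finite geometric-series identity, while the two top harmonics contribute $2nA$; therefore $2n\|f\|_\infty\geq\bigl|\sum_j(-1)^j f(t_j)\bigr| = 2nA$. This is where the essential factor of $2$ enters: the crude estimates $\|p\|_\infty^2\geq\tfrac1{2\pi}\int_T|p|^2\geq |p_n|^2+|p_0|^2=2$ and $\|p\|_\infty\geq|p_n|=1$ for $p(z)=\prod_k(z-z_k)=\sum p_jz^j$ only give $\sqrt2$ and $1$ respectively, whereas halving the angle turns $p$ into a \emph{real} trigonometric polynomial of the same degree, for which ``real'' upgrades the leading-coefficient bound to the amplitude bound above. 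Finally, tracking the equality case in the Chebyshev inequality identifies the extremal point systems as the rotated $n$-th roots of unity, in parallel with the main theorem.
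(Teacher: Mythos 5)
Your proof is correct, but it establishes the key lower bound $\max_{z\in T}\prod_k|z-z_k|\geq 2$ by a genuinely different mechanism than the paper. The paper argues by contradiction and zero counting: assuming $|q|<2$ everywhere on $T$, it writes $q^2(z)=-z^n\varrho(z)$ with $\varrho\geq 0$ real on $T$, compares with the extremal $Q(z)=(z^n-1)^2=-z^nR(z)$, and observes that $\varrho-R$ (hence $q^2-Q$) would have at least $2n$ zeros on $T$ counted with multiplicity, contradicting $\deg(q^2-Q)\leq 2n-1$. You instead pass to the real trigonometric polynomial $f(t)=\prod_k\sin(t-\phi_k/2)$ of degree $n$ with top-harmonic amplitude $2^{1-n}$ and prove the Chebyshev bound $\|f\|_\infty\geq A$ directly via the discrete orthogonality identity $\sum_{j=0}^{2n-1}(-1)^jf(t_j)=2nA$ at the points $t_j=(j\pi-\gamma)/n$; all the computations here (the vanishing geometric sums for $|m|<n$, the contribution $2nA$ from $m=\pm n$, and the leading coefficient $(2i)^{-n}e^{-i\sum\phi_k/2}$) check out. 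The normalization step is morally the same in both arguments --- squaring $q$ and factoring out $z^n$ in the paper is the half-angle substitution in disguise --- but the extremality step differs: the paper's comparison-with-the-extremizer argument is chosen because it parallels the equioscillation machinery used later for $p=2$, while your quadrature-type identity is direct (no contradiction setup), self-contained, and more readily quantitative. One caveat on your closing remark: extracting the full equality characterization from your identity requires an additional zero-counting step (to show that a degree-$n$ real trigonometric polynomial alternating between $\pm\|f\|_\infty$ at all $2n$ sample points must equal $A\cos(nt+\gamma)$), but since the proposition as stated does not assert the equality case, this does not affect the validity of your proof.
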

\begin{proof}
First, we show that $M_n^0(T) \leq 1/2$. Assume on the contrary that there
exists a set $(z_k)_1^n$ of complex numbers of norm 1, such that $|q(z)|<2$ for
every $z \in T$ with
\begin{equation} \label{qz}
q(z) = \prod_{k=1}^n (z-z_k).
\end{equation}
Note that for $v,w \in T$,
\begin{equation}\label{vw}
(v-w)^2 =    - v \,w\,\varrho
\end{equation}
holds with some $\varrho \geq 0$. Thus, $q^2(z)$ can be written as
\[
q^2(z)= (-1)^n \gamma \, z^n \varrho(z),
\]
where $\gamma = \prod z_k$, and $\varrho(z)$ is a real function defined on $T$,
taking only non-negative values. By rotation, we may assume that $\gamma =
(-1)^{n-1}$, and thus
\begin{equation}\label{q2z}
q^2(z) =- z^n \varrho(z).
\end{equation}
 According to the assumption, $0 \leq \varrho(z)<4$ for
all $z \in T$. Take now
\begin{equation}\label{Qz}
Q(z) =  (z^n - 1)^2 =  -z^n R(z).
\end{equation}
Here $R(z)$ is again a real function on $T$ satisfying $0 \leq R(z) \leq 4$;
moreover, $R(z)=0$ or $R(z)=4$ at exactly $2n$ points on $T$. Thus, counting
with multiplicities, the function $\varrho(z) - R(z)$ has at least $2n$ zeroes
on $T$, and by \eqref{q2z} and \eqref{Qz}, the same holds for $q^2(z)-Q(z)$.
However, this contradicts the fact that  $q^2(z)-Q(z)$ is a polynomial of
degree at most $2n-1$.

To see that $M_n^0(T) \geq 1/2$, choose the $n$th unit roots: $z_k = \xi_k =
e^{i 2 \pi k /n }$.
\end{proof}

Prior to the present article, the exact value of $M^p_n(T)$ was not known
except for $p=0$ or for small values of $n$. The analogous problem for $p<0$
was considered and partly solved by Stolarsky~\cite{Sto}. For $p>0$, the
following observation gives an upper estimate of $M^p_n(T)$ for any $n \geq 1$.

\begin{prop}\label{est_erdelyi}
The $L_p$ Chebyshev constants of $T$ can be estimated as
\begin{displaymath}
M^p_n(T) < \left\{ \begin{array}{ll}
c_0 \,  n /(1-p) & \textrm{for $0 < p <1$}\\
c_1 \, n (1 + \ln n) & \textrm{for $p=1$}\\
c_p \,n^p & \textrm{for $p>1$}
\end{array} \right.
\end{displaymath}
with positive constants $c_0$ and $c_p\, ( p\geq 1)$.
\end{prop}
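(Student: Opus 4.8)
The plan is to estimate $M_n^p(T)$ by an \emph{averaging} argument: for any fixed $z_1,\dots,z_n\in T$, the minimum over $z\in T$ of $g(z):=\sum_{k=1}^n|z-z_k|^{-p}$ is at most the average of $g$ against the normalised arc-length measure $\mu$ on $T$. By rotational invariance every term integrates to the same value, so $\int_T g\,d\mu=n\int_T|z-1|^{-p}\,d\mu(z)$, and writing $z=e^{i\theta}$ and using $|z-1|=2\sin(|\theta|/2)\ge 2|\theta|/\pi$ for $|\theta|\le\pi$ gives $\int_T|z-1|^{-p}\,d\mu(z)\le\frac1\pi\int_0^\pi\bigl(\frac{\pi}{2\theta}\bigr)^p\,d\theta$. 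For $0<p<1$ this integral converges to $\frac1{2^p(1-p)}$, whence $M_n^p(T)\le\frac{n}{2^p(1-p)}<\frac{n}{1-p}$, which is the first line (with $c_0=1$, the strict inequality being free since $2^p>1$).

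For $p\ge1$ the integral above diverges, so the naive average is useless; the remedy is to \emph{truncate}. Fix a small absolute constant $c$ (say $c=\pi/3$) and, writing $d_k(z)\in[0,\pi]$ for the angular distance between $z$ and $z_k$ (so $|z-z_k|=2\sin(d_k(z)/2)$), set $F(z):=\sum_{k:\,d_k(z)\ge c/n}|z-z_k|^{-p}$. I would then proceed in four steps. \textbf{(i)} The ``close'' set $B_0:=\{z\in T:\ d_k(z)<c/n\text{ for some }k\}$ is a union of $n$ arcs of angular width $2c/n$, hence $\mu(B_0)\le c/\pi\le\frac13$. \textbf{(ii)} As above, $\int_T F\,d\mu=\frac n\pi\int_{c/n}^\pi\bigl(\frac{\pi}{2\theta}\bigr)^p\,d\theta$, and evaluating this elementary integral gives $\int_T F\,d\mu<n(1+\ln n)$ when $p=1$, and $\int_T F\,d\mu<\frac{(3/2)^p}{3(p-1)}\,n^p$ when $p>1$. \textbf{(iii)} By Markov's inequality the set $B_1:=\{z\in T:\ F(z)\ge 3\int_T F\,d\mu\}$ has $\mu(B_1)\le\frac13$. \textbf{(iv)} Since $\mu(B_0\cup B_1)\le\frac23<1$, there is a point $z\in T\setminus(B_0\cup B_1)$; for such $z$ every $z_k$ satisfies $d_k(z)\ge c/n$, so the truncation discards nothing and $g(z)=F(z)<3\int_T F\,d\mu$. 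As $\int_T F\,d\mu$ does not depend on the point set, this is a uniform strict bound on $\min_z g$, and after renaming constants ($c_1:=3$, $c_p:=(3/2)^p/(p-1)$ for $p>1$) it is exactly the claimed estimate.

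The only genuine obstacle is precisely the divergence of $\int_T g\,d\mu$ for $p\ge1$; the truncation amputates the singular tail of $g$ near the $z_k$'s, while step (i) — which exploits that on average very few of the $z_k$ lie near a given $z$ — guarantees that on a set of $z$ of positive measure nothing is thrown away, so a bound on $F$ becomes a bound on $g$ itself. The scheme is uniform: it also reproves the case $0<p<1$ (the truncation then being superfluous), and the blow-up of the constants as $p\to1$ mirrors the $1/(1-p)$ and $\ln n$ features appearing in the statement.
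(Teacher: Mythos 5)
Your argument is correct, but it takes a genuinely different route from the paper. The paper's proof is deterministic and local: it evaluates the sum at the specific point $z_0$ where $|q(z)|=\prod_k|z-z_k|$ attains its maximum on $T$, and invokes a zero-density lemma (exercise E.12 of Borwein--Erd\'elyi) guaranteeing that at most $enr$ of the $z_k$ lie within angular distance $r$ of such a maximum point; after reordering this gives $|z_0-z_k|\gtrsim k/n$, and all three regimes follow from $n^p\sum_k k^{-p}$. You replace this with an averaging argument: the convergent integral of $\sum_k|z-z_k|^{-p}$ against arc-length settles $0<p<1$ outright, and for $p\ge 1$ you truncate the potential at angular distance $c/n$, bound the truncated average (your numerical evaluations in step (ii) check out, as does the Markov bound), and use a union bound on the two exceptional sets to find a point at which the truncation is vacuous, so the bound on $F$ is a bound on the full sum. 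Your route is more elementary and self-contained --- it needs no information about the distribution of zeros of a polynomial near the maximum of its modulus --- and it adapts readily to other convex kernels; the paper's route buys a distinguished evaluation point (the maximiser of $|q|$, i.e.\ the very point witnessing the $p=0$ bound) and marginally cleaner constants via $\zeta(p)$. One cosmetic correction: in step (ii) the displayed equality for $\int_T F\,d\mu$ should be an inequality $\le$, since it already uses $2\sin(\theta/2)\ge 2\theta/\pi$; this does not affect anything downstream.
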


\begin{proof}
Let $q(z)$ be the polynomial defined by \eqref{qz}, and let $z_0$ be a point on
$T$ where $q(z)$ attains its maximal modulus on $T$. According to exercise E.12
on page 237 of \cite{BE}, for every $r>0$, there are at most $e n r$ zeroes of
$q(z)$ on the arc $\{ z_0\, e^{i \rho}: \, \rho \in [-r,r] \}$. Thus,
\begin{equation}\label{est}
\sum_{k=1}^n \frac{1}{|z_0 - z_k|^p} < \sum_{k=1}^n  \left( 2
\sin\left(\frac{k}{2 en} \right)\right)^{-p} < (3\,e)^p n^p \sum_{k=1}^n \left(
\frac {1}{k} \right) ^p,
\end{equation}
where at the last inequality the constant 3 can be replaced by $1 / (2 e
\sin(1/2 e))$.

 For $p>1$, we obtain that
\begin{equation}\label{est_p>1}
M^p_n(T)< (3 e)^p \, \zeta(p) \, n^p.
\end{equation}
For $p=1$, \eqref{est} yields
\[
M^1_n(T)< 3 e \, n (1 + \ln n),
\]
whereas for $0 <p <1$,
\[
M^p_n(T)< (3 e)^p \left( \frac {n}{1-p} - \frac {p}{1-p}\right). \qedhere
\]
\end{proof}

Taking the system of the $n$th unit roots, $\xi_k = e^{i 2 \pi k/n},\ k=1,
\dots, n$, shows that the estimate provided by Proposition~\ref{est_erdelyi} is
asymptotically sharp in terms of $n$. The complete asymptotic expansion of
$M^p(\xi_1, \dots, \xi_n)$ was established by Brauchart, Hardin and Saff
\cite{BHS}. More precisely, they determined the asymptotic expansion of the
{\em Riesz $p$-energy}
\[
E^p(\xi_1, \dots, \xi_n) = \sum _{j=1}^n \sum_{\begin{subarray}{l} k=1\\
k \neq j
\end{subarray}}^n
\frac{1}{|\xi_j-\xi_k|^p}\, .
\]
Using the notation $E^p_n = E^p(\xi_1, \dots, \xi_n)$, the expansion of
$M^p(\xi_1, \dots, \xi_n)$ can  be obtained by the formula
\[
M^p(\xi_1,\dots, \xi_n) = \frac{E^p_{2n}}{2n} - \frac{E^p_n}{n}\,.
\]
In comparison with Proposition~\ref{est_erdelyi}, we note that the results of
\cite{BHS} yield that
\begin{align*}
M^p(\xi_1,\dots, \xi_n)  &\approx (2^p-1) \zeta(p) n^p  &\textrm{for $p>1$,}\\
M^p(\xi_1,\dots, \xi_n)  &\approx \frac 1 \pi \, n \ln n  &\textrm{for
$p=1$, and}\\
M^p(\xi_1,\dots, \xi_n)  &\approx \frac {2^{-p}}{\sqrt{\pi}} \frac
{\Gamma((1-p)/2)}{\Gamma(1-p/2)}\, n &\textrm{for $0<p<1$.}
\end{align*}

That the point systems on $T$ minimising the Riesz energies are equally
distributed on $T$ was (essentially) proved by Fejes T\'oth \cite{FT}. The case
of Chebyshev constants is much harder to tackle. One may even conjecture that
the equally distributed case is extremal in a more general setting.

Let $f$ be an even, $2 \pi$-periodic real function. We say that $f$ is {\em
convex}, if it is convex on $(0, 2 \pi)$; in this case, $f$ has a maximum at
$0$. We allow $f$ to have a pole at $0$. For $\theta_1, \dots, \theta_n \in [0,
2\pi)$, let
\[
S_f(\theta) = \sum_{k=1}^n f(\theta - \theta_k),
\]
and
\[
M_f(\theta_1, \dots, \theta_n) = \min_{\theta \in [0,2 \pi)} S_f(\theta).
\]

 We  say that $\theta_1, \dots, \theta_n \in [0, 2\pi)$ is {\em
uniformly distributed on $[0,2 \pi)$}, if for some $\lambda \in \R$,
\[
\{\theta_1, \dots, \theta_n\} = \left \{\left(\lambda +  \frac {2 \pi k }
{n}\right) \mod 2 \pi: k= 1, \dots, n \right \}.
\]

\begin{conj}\label{convconj}
For any $2 \pi$-periodic, even, convex function $f$, $M_f(\theta_1, \dots,
\theta_n)$ is maximised when $(\theta_k)_1^n$ is uniformly distributed on
$[0,2\pi)$.
\end{conj}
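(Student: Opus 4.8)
We sketch a possible approach to Conjecture~\ref{convconj}. A natural plan is to run the extremal-problem argument used above for $M_n^0(T)$ and, in the body of this paper, for $f(\theta)=(2\sin(\theta/2))^{-2}$ (where the bound is $n^2/4=\sum_{j=1}^n f((2j-1)\pi/n)$), rather than to split $f$ into simpler pieces. A decomposition of $f$ is in fact of no help, since one cannot move a \emph{minimum} through an average or an integral in the useful direction; for the same reason no averaging bound $\min_\theta S_f(\theta)\leq\int S_f\,d\nu$ can settle the conjecture, because the uniformly distributed configuration is precisely the one making $\min_\theta S_f$ as large as possible, so every such average overshoots. Two reductions come first. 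One may assume $f$ finite: replacing $f$ near $0$ by its tangent line at a point $\delta\downarrow0$ gives finite admissible $f_\delta\leq f$ agreeing with $f$ off a shrinking neighbourhood of $0$, and since $S_{f,\Theta}$ and $S_{f_\delta,\Theta}$ are both large near each atom $\theta_k$, their minima over $\theta$ agree once $\delta$ is small, so extremality of the uniform configuration for all $f_\delta$ passes to $f$. Next, the quantity to be beaten is explicit: for the uniform configuration the function $g(\theta)=\sum_{j=0}^{n-1}f(\theta-2\pi j/n)$ is even, $\tfrac{2\pi}{n}$-periodic and convex on each of its period intervals (each summand stays within a single period of $f$), hence minimised at the midpoint $\pi/n$; thus $M_f(\text{uniform})=g(\pi/n)=\sum_{j=1}^n f((2j-1)\pi/n)$, and the conjecture is the assertion $\min_\theta S_f(\theta)\leq\sum_{j=1}^n f((2j-1)\pi/n)$ for every configuration.

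For the variational step, compactness of the configuration space and continuity of $M_f$ give a maximising configuration $\Theta^*=(\theta_k^*)$; let $Z$ be the closed set of global minimisers of $S_{f,\Theta^*}$. First $|Z|\geq2$: if $\zeta$ were the unique minimiser, the envelope theorem would give $\partial_{\theta_k}M_f(\Theta^*)=-f'(\zeta-\theta_k^*)$, whose vanishing for every $k$ (forced at a maximum on the torus) yields $\zeta-\theta_k^*=\pi$ for all $k$, i.e.\ all atoms coincide, which is absurd since even the uniform configuration beats $nf(\pi)$. More generally the first-order (Danskin/Kuhn--Tucker) condition for the max--min produces weights $\lambda_\zeta\geq0$ on $Z$ with $\sum_\zeta\lambda_\zeta=1$ and
\[
\sum_{\zeta\in Z}\lambda_\zeta\,f'(\zeta-\theta_k^*)=0\qquad(k=1,\dots,n),
\]
together with the one-sided conditions that $Z$ meets both closed half-circles centred at each $\theta_k^*$. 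Finally $S_{f,\Theta^*}'=\sum_k f'(\cdot-\theta_k^*)$ is non-decreasing on each arc between consecutive atoms and drops at every atom, so $S_{f,\Theta^*}$ has at most $n$ local minima; hence $2\leq|Z|\leq n$, and one would want to push this to $|Z|=n$, a full equioscillation with a minimiser strictly inside each of the $n$ gaps.

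The remaining task --- deducing from this equioscillation and balancing data that $\Theta^*$ is uniformly distributed, after which the value computed above finishes the argument --- is where I expect the genuine obstacle to lie, and where a new idea seems necessary. For $f(\theta)=(2\sin(\theta/2))^{-2}$ there is an algebraic windfall: with $z=e^{i\theta}$ and $q(z)=\prod_k(z-z_k)$ one gets $S_f(\theta)=P(z)/q(z)^2$ with $\deg P\leq2n-1$, so ``the minimum is attained at $m$ points'' becomes ``$P-\tfrac{n^2}{4}q^2$ has at least $2m$ zeros on $T$'', which for $m$ large enough forces $q(z)=z^n-c$. Nothing of the kind is at hand for a general convex $f$. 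What I would attempt instead is to combine the $n$ height equations $S_{f,\Theta^*}(\zeta)=M_f(\Theta^*)$ (one $\zeta$ per gap) with the balancing relations above and the second-order maximality conditions, and then to show by a convexity/monotonicity comparison between neighbouring gaps that ``all gaps are equal'' propagates around the circle --- but I do not see how to carry this through in general. That is presumably why the statement appears here only as a conjecture, with proofs available in those cases --- $p=2$, or small $n$ --- where extra structure can stand in for the missing rigidity.
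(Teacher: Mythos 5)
The statement you were asked to prove is labelled a \emph{conjecture} in the paper, and the paper offers no proof of it: the authors establish only the case $p=0$ (the Proposition on $M_n^0(T)=1/2$) and the case $f(\theta)=|e^{i\theta}-1|^{-2}$, i.e.\ $p=2$ (the main Theorem), and explicitly leave the general convex $f$ open. Your proposal is candid that it does not close the argument, and that assessment is correct: what you have written is a programme, not a proof, and the point at which you stop --- deducing that an equioscillating, first-order-stationary configuration must be the uniform one --- is exactly the missing rigidity step that makes this a conjecture. So there is no error of the form ``a step that would fail''; rather, the decisive step is absent, as you yourself say.

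It is worth comparing your partial progress with the paper's. Your variational discussion (Danskin weights, at most one local minimum per gap, the wish to force $|Z|=n$) is the paper's Lemma~\ref{equimini}, which the authors prove more cleanly and completely: for a locally maximal configuration, pulling two consecutive atoms $\theta_1,\theta_2$ apart to $\theta_1-\eps,\theta_2+\eps$ strictly decreases $S_f$ on $[\theta_1,\theta_2]$ and increases it elsewhere, so if some gap's local minimum exceeded the global one the configuration could be improved; hence \emph{all} local minima are equal. This gives your desired full equioscillation directly, without envelope theorems, and under the mere convexity hypothesis. Your computation of $M_f$ for the uniform configuration and the reduction to finite $f$ are fine. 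What neither you nor the paper can do in general is convert ``$S_f$ equioscillates with $n$ equal minima'' into ``the atoms are equally spaced'': for $p=2$ this follows because $S_f$ is the real part of a rational function of controlled degree (so equioscillation of maximal order forces a Blaschke-product identity, equation \eqref{rateq2}), or alternatively from the equality case of Bernstein's inequality; no analogue of this degree-counting is available for a general convex kernel. Your diagnosis of where the obstacle lies therefore agrees with the paper's.
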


It is easy to see that  for $p>0$, the function $f(\theta) = \sin ^{-p}
(\theta/2)$ satisfies the above conditions. Thus, in particular, we conjecture
that for any $p > 0$ and $n \geq 1$,
\begin{equation}\label{mptextr}
M^p_n(T) = M^p(\xi_1,\dots, \xi_n).
\end{equation}

A strong indication for the validity of Conjecture~\ref{convconj} is the
following fact. A set of points $(\theta_j)_1^n \subset [0, 2\pi)$ is {\em
locally maximal with respect to $f$}, if there exists  $\nu>0$, such that for
any $(\theta'_j)_1^n \subset [0, 2\pi)$ satisfying $|\theta_j - \theta'_j| <
\nu \, (\mod 2\pi)$ for every $j$,
\[
M_f(\theta_1, \dots, \theta_n) \geq M_f(\theta'_1, \dots, \theta'_n).
\]
Note that when $f$ is strictly convex, locally minimal cases are of little
interest: then $\theta_1= \dots = \theta_n$. Clearly, $S_f$ is convex on the
intervals between consecutive $\theta_j$'s, and if $f$ is strictly convex, then
$S_f$ has exactly one local minimum on each of these intervals.

\begin{lemma}\label{equimini}
If $f$ satisfies the above conditions, and $(\theta_j)_1^n$ is a locally
maximal set with respect to $f$, then all the local minima of $S_f(\theta)$ are
equal.
\end{lemma}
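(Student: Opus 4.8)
The plan is to argue by contradiction using a local perturbation of the point set that increases the minimum value of $S_f$, contradicting local maximality. Suppose $(\theta_j)_1^n$ is locally maximal but the local minima of $S_f$ are not all equal. Label the consecutive points (cyclically) so that the arcs between them are $I_1, \dots, I_n$, and on each arc $I_\ell$ the function $S_f$ has a unique local minimum value $m_\ell$ (uniqueness holds when $f$ is strictly convex; for the general convex case one takes the infimum over the flat minimum set, and a small extra argument handles the boundary behaviour when $f$ has a pole, since near a pole $S_f \to +\infty$). The global minimum $M_f(\theta_1,\dots,\theta_n)$ equals $\min_\ell m_\ell$; let $L$ be the set of indices achieving this minimum. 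Since not all $m_\ell$ are equal, $L$ is a proper nonempty subset of $\{1,\dots,n\}$.

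The key step is to compute how each $m_\ell$ changes under an infinitesimal move of a single point $\theta_j$. If $\theta_j$ is an endpoint of arc $I_\ell$ and we move $\theta_j$ by $\delta$, then since $m_\ell$ is a critical value of $S_f$ restricted to $I_\ell$, the envelope theorem gives $\frac{d m_\ell}{d\delta} = -f'(\tau_\ell - \theta_j)$ where $\tau_\ell$ is the location of the minimum on $I_\ell$ (the derivative only sees the explicit dependence on $\theta_j$, not the implicit dependence through $\tau_\ell$). The sign of $f'(\tau_\ell - \theta_j)$ is controlled by convexity and evenness: $f'$ is negative on $(0,\pi)$ and positive on $(\pi,2\pi)$, i.e. $-f'$ points "away from" $\theta_j$. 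Concretely, moving $\theta_j$ away from the arc $I_\ell$ (shrinking the arc from the $\theta_j$ side) raises $m_\ell$, and moving it into the arc lowers $m_\ell$. So moving a single point affects the two adjacent local minima in opposite directions.

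The decisive combinatorial observation is then: I want to find a perturbation $(\delta_j)$, arbitrarily small, so that \emph{every} $m_\ell$ with $\ell \in L$ strictly increases (to first order), while the $m_\ell$ with $\ell \notin L$ may decrease but start strictly above the minimum, so they stay above for small enough perturbation. Such a perturbation would strictly increase $M_f$, the contradiction. To build it: consider the "block structure" of $L$ — maximal runs of consecutive arcs in $L$. Within a run $I_a, I_{a+1}, \dots, I_b$ of arcs all in $L$, the interior points $\theta_{a+1}, \dots, \theta_{b}$ each border two arcs of $L$, so moving them helps one and hurts the other. But the two \emph{boundary} points of the run border one arc in $L$ and one arc not in $L$; moving those boundary points outward raises the two end arcs $m_a$ and $m_b$ of the run without hurting anything in $L$. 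The obstruction is the \emph{interior} arcs of a run of length $\geq 3$: their local minima are bordered on both sides by points whose motion is already constrained. Here one must use more structure — either a second-order argument, or the observation that at a locally maximal configuration the first-order conditions already force a rigidity: summing the envelope derivatives around a run and using that the run is "sandwiched" between strictly larger values forces the interior $m_\ell$'s to already be strictly larger, or forces $L$ to be all of $\{1,\dots,n\}$.

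I expect the main obstacle to be exactly this last point: ruling out a run of three or more consecutive arcs all attaining the global minimum. The clean way is probably to set up the first-order optimality (Lagrange/KKT) conditions for the $\min$–$\max$ problem $\max_{(\theta_j)} \min_\ell m_\ell(\theta)$ directly: local maximality means there is a probability vector $(\mu_\ell)_{\ell \in L}$ with $\sum_{\ell \in L} \mu_\ell \, \nabla_\theta m_\ell = 0$. Writing out each component (the derivative with respect to $\theta_j$ only involves the two arcs adjacent to $\theta_j$) gives, for each $j$, a relation $\mu_{\ell} f'(\tau_{\ell}-\theta_j) = \mu_{\ell'} f'(\tau_{\ell'}-\theta_j)$ linking the two adjacent arcs; chasing these relations around the circle, together with strict convexity of $f$ (so $f'$ is strictly monotone and the signs are definite), should force either $L$ to be empty-complemented (i.e. $L$ is everything, which is the desired conclusion) or a contradiction with the $\mu_\ell$ being nonnegative and not all zero. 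The bookkeeping of signs and the degenerate case where $f$ has a pole at $0$ (so that one of the local minima might "escape to infinity", which actually only helps) are the fiddly parts, but the conceptual content is the envelope computation plus this sign-chasing around the cycle.
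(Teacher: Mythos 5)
Your plan has a genuine gap, and it is exactly the one you flag yourself: you cannot raise the local minimum of an arc of $L$ that is bordered on both sides by other arcs of $L$ (an interior arc of a run of length $\geq 3$) by moving its two endpoints, since each such move raises one arc of the run while lowering another. The KKT/sign-chasing argument you propose to close this case is not carried out --- ``should force a contradiction'' is precisely where the proof would have to happen --- so what you have is a plan with an acknowledged hole rather than a proof. A secondary problem is that the envelope-theorem and first-order-optimality framework presumes differentiability of $f$ and smooth dependence of the minimizers $\tau_\ell$ on the configuration; the lemma assumes only convexity (and allows a pole), so this machinery is not available without extra work.

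The gap disappears if you invert the perturbation, which is what the paper does. Since not all local minima are equal, there is an arc $[\theta_1,\theta_2]$ whose local minimum is \emph{strictly larger} than $M_f(\Theta)$; move its two endpoints apart, $\theta_1'=\theta_1-\eps$, $\theta_2'=\theta_2+\eps$. Evenness, periodicity and convexity of $f$ give $f(u)+f(v)\geq f(u+\eps)+f(v+\eps)$ whenever $u,v\geq 0$ and $u+v\leq 2\pi$, whence $S'_f\leq S_f$ on $[\theta_1,\theta_2]$ and $S'_f\geq S_f$ everywhere outside $[\theta_1',\theta_2']$. For small $\eps$ the global minimum of $S'_f$ is still attained outside $[\theta_1',\theta_2']$, so $M_f(\Theta')>M_f(\Theta)$, contradicting local maximality. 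The point is that this single two-point move raises $S_f$ \emph{pointwise} on every arc of $L$ at once, so the block structure of $L$ that obstructs your argument never has to be analysed, and no differentiability or first-order conditions are needed.
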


\begin{proof}
Let $\Theta = (\theta_j)_1^n$, and assume on the contrary that
$0<\theta_1<\theta_2<2 \pi$ are two consecutive points such that the local
minimum of $S_f(\theta)$ on the interval $[\theta_1, \theta_2]$ is strictly
larger than $M_f(\Theta)$. Let $\eps$ be a small positive number, and consider
the new set of points $\Theta'$ obtained from $\Theta$ by exchanging $\theta_1$
and $\theta_2$ for
\[
\theta_1' = \theta_1- \eps \; , \; \theta_2' = \theta_2+\eps.
\]
Let $S'_f(\theta)$ be the function determined by the point set $\Theta'$. By
the symmetry and convexity of $f$, it is easy to verify that for  $\theta_1
\leq \theta \leq \theta_2$,
\begin{equation}\label{sfchange}
S_f(\theta) \geq S'_f(\theta).
\end{equation}
Interchanging the role of $\theta_1$ and $\theta_2$, it also follows that for
$\theta \in [0,2\pi] \setminus [\theta_1', \theta_2']$, the reverse of
\eqref{sfchange} holds. If $\eps>0$ is sufficiently small, then the global
minimum of $S'_f(\theta)$ is still attained on $[0,2\pi] \setminus [\theta_1',
\theta_2']$. Thus, $M_f(\Theta) < M_f(\Theta')$, which contradicts the
extremality of $(\theta_j)_1^n$.
\end{proof}

Analogous equioscillation properties often arise in the context of minimax
problems. For instance, Bernstein conjectured that for the optimal set of nodes
for Lagrange interpolation,  the so-called Lebesgue function of the projection
is equioscillating. This long-standing problem was solved by Kilgore
\cite{K76}. In this case, the equioscillation property also leads to the
characterisation of the optimal  nodes, see \cite{K78} and \cite{dBP}.

 The main goal of the paper is to give a complex analytic proof for the $p=2$
case of \eqref{mptextr}.

\begin{theorem}
For any set $z_1, \dots, z_n$ of complex numbers of modulus 1, there exists a
complex number $z_0$ of norm 1, such that
\begin{equation}\label{planpolineq}
\sum_{j=1}^n \frac{1}{|z_0-z_j|^2} \leq \frac{n^2}{4}\;.
\end{equation}
The inequality is sharp if and only if the numbers $z_j$ are distinct and there
exists a $c \in T$ such that $z_j^n = c$ for all $j = 1,2,\dots,n$.
\end{theorem}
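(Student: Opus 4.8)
The plan is to mimic the argument used for Proposition on $M_n^0(T)$, replacing the polynomial $q(z)=\prod(z-z_k)$ by a rational function and comparing it against the extremal configuration of $n$th roots of a unimodular constant. Set $q(z)=\prod_{k=1}^n(z-z_k)$ and observe, as in \eqref{vw}, that for $z\in T$ one has $|z_0-z_j|^2 = -\,\bar z_0\,z_j^{-1}\,(z_0-z_j)^2$ up to a unimodular factor, so that $\sum |z_0-z_j|^{-2}$ is, after clearing denominators, governed by the rational function
\[
F(z) = \frac{z^n}{q(z)^2} \cdot \bigl(\text{a fixed unimodular constant}\bigr),
\]
which on $T$ is real (by the same computation that produced \eqref{q2z}) and, suitably normalised, equals $-\,\varrho(z)^{-1}$ up to sign, where $\varrho(z)\ge 0$. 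The quantity $\sum_j |z_0-z_j|^{-2}$ can then be recovered from $F$ together with its logarithmic derivative evaluated along $T$; concretely, writing $z=e^{i\theta}$, the sum $S(\theta)=\sum_j\sin^{-2}((\theta-\theta_j)/2)$ (times $1/4$) is, up to elementary factors, a second logarithmic derivative of $q(e^{i\theta})$, hence expressible through $q$ and $q'$ and $q''$. The target bound $S_0 \le n^2/4$ at some point is equivalent to saying that the equioscillating comparison function $R(z)=(z^n-1)^2 = -z^n R(z)$ from \eqref{Qz}, for which the corresponding sum is identically $n^2/4$ at the midpoints between consecutive roots, cannot be strictly dominated everywhere.

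The key steps, in order, are: (i) express $\sum_j|z_0-z_j|^{-2}$ for $z_0=e^{i\theta}\in T$ as an explicit combination of $q,q',q''$ evaluated at $e^{i\theta}$ — this is the identity $\sum_j \frac{1}{|z_0-z_j|^2} = \Re\bigl(\text{something}\bigr)$ that turns the geometric sum into a rational-function statement; (ii) argue by contradiction: if $\sum_j|z-z_j|^{-2} > n^2/4$ for all $z\in T$, then the function $g(z) = n^2 q^2(z) - 4 z^{?}\,(\text{derivative terms})$ has too many zeros on $T$. The cleanest route is to form $h(z) = q^2(z)\cdot\bigl(\tfrac{n^2}{4} - S(z)\bigr)$, multiply by the appropriate power of $z$ to make it a genuine polynomial of controlled degree (at most $2n-2$ or $2n$), and show the strict-inequality assumption forces $h$ to have a definite sign on all of $T$ while simultaneously, by comparison with the extremal $R(z)=(z^n-1)^2$ case where $h$ vanishes at $2n$ points with the right multiplicities, $h$ must change sign or vanish too often — exactly as in the Proposition's degree-counting contradiction. (iii) Finally, the equality discussion: trace through when the $2n$ zeros of the difference can all coalesce onto $T$ with full multiplicity; this forces $\varrho(z)-R(z)\equiv 0$, i.e. $q^2(z) = c'(z^n-1)^2$ up to rotation, hence $\{z_j\}$ are exactly the $n$th roots of a unimodular $c$, and in particular distinct.

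The main obstacle I expect is step (i): getting the sum $\sum_j |z_0-z_j|^{-2}$ into a form that is a \emph{rational function of bounded degree} whose numerator vanishes the right number of times. Unlike the $p=0$ case, where $q(z)^2 - Q(z)$ is visibly a polynomial of degree $\le 2n-1$, here the second-order (Bernstein-type) nature of $p=2$ means one naturally meets $q''$ or $(q'/q)'$, and one must check that after clearing the $q^2$ denominator and the powers of $z$ the result is a polynomial of degree at most $2n-1$ (or $2n$ with a known sign-definite leading behaviour) — otherwise the degree count does not close. This is presumably where Bernstein's inequality enters in the second proof mentioned in the abstract: Bernstein's inequality controls $|q'(z)|$ on $T$ by $n \max_T|q|$, which is precisely the $p=2$, uniform-norm shadow of \eqref{planpolineq}, and the sharp constant $n^2/4$ is the square of the Bernstein constant $n/2$ appropriate to the relevant normalisation. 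So a secondary plan, if the rational-function degree count is awkward, is to deduce \eqref{planpolineq} directly from the equioscillation Lemma \ref{equimini} applied to $f(\theta)=\sin^{-2}(\theta/2)$: a locally maximal configuration has all local minima of $S_f$ equal, and one then shows that common value is at most $n^2/4$ by evaluating against the uniform configuration, using that $S_f$ for the uniform configuration is the constant $n^2/4$ (a classical identity, $\sum_{k=0}^{n-1}\sin^{-2}((\theta - 2\pi k/n)/2) = n^2/\sin^2(n\theta/2)$, whose minimum over $\theta$ is $n^2$, giving $n^2/4$ after the factor $1/4$).
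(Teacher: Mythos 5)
Your proposal assembles the right raw materials --- the identity expressing $\tfrac14\sum_j\sin^{-2}((\theta-\theta_j)/2)$ as $-(Q'/Q)'$ for $Q(t)=\prod_j\sin((t-t_j)/2)$, Bernstein's inequality with constant $n/2$, and the closed form of the sum for the uniform configuration --- but neither of your two plans is a complete argument, and the primary one fails at step (ii), not at step (i) as you suspect. Step (i) is unproblematic: $q^2(z)\sum_j|z-z_j|^{-2}=-z\sum_j z_j\prod_{k\neq j}(z-z_k)^2$ is a polynomial of degree at most $2n-1$, so $h(z)=q^2(z)\bigl(\tfrac{n^2}{4}-S(z)\bigr)$ is a polynomial of degree $2n$ and $z^{-n}h(z)$ is real on $T$. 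The trouble is the degree count itself. In the $p=0$ Proposition the contradiction came from the two-sided bound $0\le\varrho(z)<4$ together with the fact that the comparison function attains both endpoints $0$ and $4$ at $2n$ points in total; that is what produces $2n$ zeros of the difference. Here the hypothesis $S(z)>n^2/4$ on all of $T$ gives only the one-sided information $z^{-n}h(z)\ge 0$ with double zeros at the $z_j$; there is no a priori upper bound on $z^{-n}h(z)$ matching the value $n^2$ attained by the comparison polynomial $\tfrac{n^2}{4}(z^n+1)^2$ at the points with $z^n=1$, so you cannot force $2n$ sign changes of the difference, and the contradiction does not close. This is exactly why the paper's first proof has to pass through Lemma~\ref{equimini}, the construction of Lemma~\ref{equifunc}, and a delicate leading-coefficient computation rather than a direct comparison with the extremal configuration.

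Your secondary plans also stop short. Quoting Lemma~\ref{equimini} and ``evaluating against the uniform configuration'' is not an argument: equality of the local minima of $S_f$ does not by itself bound their common value, and there is no direct evaluation comparing an arbitrary locally maximal configuration with the uniform one. The Bernstein route, which is the paper's second proof, needs one decisive choice you never make: take $t_0$ to be a point where $|Q|$ is maximal. Then $Q'(t_0)=0$, so the dangerous term in $-(Q'/Q)'=(Q'/Q)^2-Q''/Q$ vanishes, and the sum equals $|Q''(t_0)|/|Q(t_0)|\le \sup|Q''|/\sup|Q|\le (n/2)^2$ by two applications of Bernstein's inequality to the entire function $Q$ of exponential type $n/2$. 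Without specifying $t_0$ the identity gives nothing, since near a zero of $Q$ the term $(Q'/Q)^2$ blows up. The equality case then follows from the characterisation \eqref{e2} of extremals in Bernstein's inequality, which forces $Q(z)=ae^{inz/2}+be^{-inz/2}$ and hence $z_j^n=-b/a$ for all $j$, rather than from your zero-coalescence argument, which inherits the gap in the degree count.
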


We present two proofs for the above Theorem. The first one is based on
Lemma~\ref{equimini} and it uses the equioscillating property of the functions
arising in local extremal cases. The second proof refers to Bernstein's
inequality, which in turn is based again on the equioscillating property. We
believe that both of these proofs are of independent interest.

We note that the problem arose in connection with the so-called polarisation
problems. In fact, the Theorem is the planar case of the strong polarisation
problem. There is a third proof of it following these lines; we refer the
interested reader to \cite{A}. This proof uses directly the extremal property
of the Chebyshev polynomials, and thus it is a close relative of the second
proof presented here. For reasons of conserving space, we do not give a
detailed description of the polarisation problems, which the interested reader
can find in \cite{A} and in \cite{PR}. Also, numerous results in potential
theory are related to the present problem, see e.g. \cite{FN}.

\section{Complex analytic tools}\label{proofs}

The following notion will play an important role; we follow Szeg\H o~\cite{Sze}.

\begin{defi}\label{inverspol}
Let $g(z)= a_0 + a_1 z + \dots + a_n z^n$ be a complex polynomial. Its {\em
reciprocal polynomial of order $n$} is defined by
\[
g^*(z)=  \bar{a}_n + \bar{a}_{n-1}z + \dots +\bar{a}_0 z^n.
\]
\end{defi}

If we do not specify otherwise, $g^*(z)$ will denote the reciprocal polynomial
whose order is the exact degree of $g$.  For any non-zero complex number $z$,
let $z^*$ denote its image under the inversion with respect to complex unit
circle $T$: $ z^* = 1/\bar z$. Clearly,
\begin{equation}\label{zz*}
g^*(z) = z^n \overline{g(z^*)},
\end{equation}
and thus, if the non-zero roots of $g(z)$ are $\zeta_1, \dots , \zeta_n$, then
the non-zero roots of $g^*(z)$ are $\zeta^*_1, \dots , \zeta^*_n$.

In particular, if all zeroes of $g(z)$ lie on $T$, then the zeroes of $g(z)$
and $g^*(z)$ agree, hence we obtain the following.

\begin{prop}\label{gg*}
If all zeroes of the polynomial $g(z)$ have modulus~1, then
\[
g^*(z) = \gamma g(z)
\] for a complex constant $\gamma$ with $|\gamma|=1$.
\end{prop}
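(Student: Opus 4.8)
The plan is to compare $g$ and $g^*$ as polynomials of \emph{exact} degree $n$ and to conclude that they must be scalar multiples of one another. First I would observe that since every zero of $g$ lies on $T$, none of them equals $0$: writing $g(z) = a_n \prod_{k=1}^n (z-\zeta_k)$ with $|\zeta_k| = 1$, we get $a_0 = g(0) = a_n (-1)^n \prod_k \zeta_k \neq 0$. Consequently the reciprocal polynomial $g^*(z) = \bar a_n + \dots + \bar a_0 z^n$ also has exact degree $n$, its leading coefficient being $\bar a_0 \neq 0$.

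Next I would invoke the identity \eqref{zz*}, namely $g^*(z) = z^n \overline{g(z^*)}$ with $z^* = 1/\bar z$, together with the observation recorded just above the proposition: the non-zero roots of $g^*$ are the inversive images $\zeta_k^* = 1/\bar\zeta_k$ of the roots of $g$. But $|\zeta_k| = 1$ forces $\zeta_k^* = \zeta_k$, so $g$ and $g^*$ have precisely the same $n$ zeros, counted with multiplicity. Since they have the same degree $n$, it follows that $g^*(z) = \gamma\, g(z)$ for a nonzero constant $\gamma$; comparing the coefficients of $z^n$ gives $\gamma = \bar a_0 / a_n$.

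Finally, to see that $|\gamma| = 1$, I would simply compute $|a_0| = |a_n| \prod_k |\zeta_k| = |a_n|$, whence $|\gamma| = |\bar a_0/a_n| = 1$. (Alternatively: the order-$n$ reciprocation is conjugate-linear and, applied twice to a polynomial of exact degree $n$, is the identity; applying it to $g^* = \gamma g$ yields $g = \bar\gamma\, g^* = |\gamma|^2 g$, so $|\gamma| = 1$.) There is no serious obstacle in this argument; the one point that genuinely needs the hypothesis is the verification that $g^*$ really has degree $n$, i.e.\ that $a_0 \neq 0$, which is exactly where the assumption that no zero of $g$ vanishes is used.
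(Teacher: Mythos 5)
Your argument is correct and follows essentially the same route as the paper, which deduces the proposition directly from \eqref{zz*} and the observation that zeroes on $T$ are fixed by the inversion $z \mapsto 1/\bar z$, so that $g$ and $g^*$ share the same zeroes and differ by a constant of modulus $1$. Your explicit verification that $a_0 \neq 0$ (hence $g^*$ has exact degree $n$) and the coefficient computation for $|\gamma|=1$ are just the details the paper leaves implicit.
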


Lemma~\ref{equimini} implies that for a stationary point set, the resulting
function $S_f(\theta)$ is equioscillating. In the special case that we treat,
$S_f(\theta)$ can be written as the real part of a complex rational function.
 In light of these observations, we introduce the following concept.

\begin{defi}\label{equioscill}
The real valued function $f$  on $T$ is {\em equioscillating of order $n$}, if
there are $2n$ points $w_1, w_2, \dots, w_{2n}$ on $T$ in this order, such that
\[
f(w_j) = (-1)^{j} \|f\|_T
\]
for every $j=1, \dots , 2n$, and $|f(z)|<\|f\|_T$ if $z \neq w_j$ for any $j$.
\end{defi}

Although equioscillation in general is not a very specific property (plainly,
any real valued function on $T$ whose level sets are finite has a shifted copy
which is equioscillating of some order), equioscillation of a possible maximal
order is a strong condition. This becomes apparent in the context of rational
functions.

Suppose that $R(z)$ is a rational function, whose numerator is of degree $k$
and whose denominator has degree $l$; then the real and imaginary parts of
$R(z)$ are the quotients of two trigonometric polynomials of degrees $k$ and
$l$, and therefore $\Re (R(z))$ and $\Im( R(z))$ cannot be equioscillating of
order larger than $\max \{k,l \}$. These simple observations are intimately
tied to the right Bernstein-type inequalities for spaces of rational functions
on the unit circle as well as for spaces of ratios of trigonometric polynomials
on the period, see \cite{BE96} and \cite{BE94}, respectively.

A characterisation of those rational functions whose real and imaginary parts
are oscillating with the maximal possible order was given by Glader and H\"ogn\"{a}s
\cite{GH}. They showed that if $R(z)$ is a rational function with numerator and
denominator degrees at most $n$, and $\Re (R(z))$ and $\Im( R(z))$ are
equioscillating functions on $T$ of order $n$, then
\[R(z) = c \,B(z) \textrm{ or } R(z) = c / B(z),
\] where $c$ is a real constant and $B(z)$ is a finite Blaschke product of order
$n$:
\[
B(z)= \rho \,z^k \prod_{j=1}^{n-k} \frac{z- \alpha_j}{1- \bar{\alpha}_j z}\;.
\]
Here $\rho,\alpha_1, \dots, \alpha_{n-k}$ are complex numbers with
$|\rho|=1$ and $0<|\alpha_j|<1$.

The essence of the above result of \cite{GH} is the following statement, for
which we present a simple proof.

\begin{lemma}\label{equifunc}
Suppose that $ w_1, w_2, \dots, w_{2n},w$ are different points on $T$ in this
order. There exists a complex polynomial $h(z)$ of degree $n$, such that
\[
\frac{h(w_k)}{h^*(w_k)}=(-1)^{k+1}
\]
for each $k = 1 ,\dots, 2n$, and
\[
\frac{h(w)}{h^*(w)} = i.
\]
\end{lemma}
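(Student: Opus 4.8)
The plan is to build $h$ directly from the prescribed data on $T$ by interpolation. Write $h(z) = \prod_{j=1}^n (z-\alpha_j)$ with unknowns $\alpha_1,\dots,\alpha_n \in \C$; then $h^*(z) = \prod_{j=1}^n (1-\bar\alpha_j z)$. For $z=w$ on $T$ we have $|w|=1$, so $w-\alpha_j$ and $1-\bar\alpha_j w = \bar w\,\overline{(w-\alpha_j)}\cdot w \cdot$ (up to a unit factor) are related by $\overline{h(w)/h^*(w)} \cdot h(w)/h^*(w)$ having modulus $1$ whenever $|w|=1$; indeed from \eqref{zz*}, $h^*(w) = w^n\overline{h(w)}$ for $|w|=1$, so $h(w)/h^*(w) = w^{-n} h(w)/\overline{h(w)}$, a complex number of modulus $1$. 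Hence the ratio $h/h^*$ maps $T$ to $T$, and prescribing its values at the $2n+1$ points $w_1,\dots,w_{2n},w$ is the natural request. The first step is to record that $h(w_k)/h^*(w_k) = (-1)^{k+1}$ and $h(w)/h^*(w) = i$ are each equivalent to an equation of the form $w_k^{-n} h(w_k) = \pm\,\overline{h(w_k)}$, respectively $w^{-n}h(w) = i\,\overline{h(w)}$; writing $g(z) = z^{-n/2}$-type normalisations is awkward when $n$ is odd, so instead I would pass to the equivalent real-linear conditions on the coefficients of $h$ below.

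The cleanest route is to linearise. Let $h(z) = \sum_{m=0}^n c_m z^m$ with $c_n = 1$ fixed (degree exactly $n$). The condition $h(w_k)/h^*(w_k) = (-1)^{k+1}$ rearranges, using $h^*(w_k) = w_k^n\overline{h(w_k)}$ valid since $|w_k|=1$, to
\[
h(w_k) = (-1)^{k+1} w_k^{n}\,\overline{h(w_k)},
\]
and similarly $h(w) = i\,w^n\,\overline{h(w)}$. Each such equation says that $w_k^{-n/2} h(w_k)$ (choosing a square root of $w_k^{-n}$) lies on a fixed line through the origin in $\C$: the real axis with sign $(-1)^{k+1}$ for the $w_k$, and the line $\{re^{i\pi/4}: r\in\R\}$ for $w$. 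Multiplying out, each is one real-linear equation in the $2n$ real unknowns $\Re c_0,\Im c_0,\dots,\Re c_{n-1},\Im c_{n-1}$. Thus the $2n+1$ conditions impose a system of $2n+1$ real-linear equations in $2n$ real unknowns. Generically this is overdetermined, so the key point — and the main obstacle — is to show it is in fact consistent, i.e. the $(2n+1)$-st equation is a consequence of the first $2n$, and moreover that the resulting $h$ has no zeros on $T$ at the $w_k$ (so that the ratios are genuinely defined) and has exact degree $n$.

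To handle the consistency I would argue as follows. First solve the $2n$ real-linear conditions coming from $w_1,\dots,w_{2n}$ alone: show this homogeneous-plus-affine system (affine because $c_n=1$) has a solution $h$, by a dimension count — $2n$ equations in $2n$ unknowns, and one checks the associated homogeneous system has only the trivial solution because a nonzero $h$ of degree $<n$ with $h(w_k)/h^*(w_k) = (-1)^{k+1}$ would force $h/h^*$, a Blaschke-type ratio of degree $<n$, to take the value of a degree-$n$ equioscillation, contradicting the degree bound on sign changes of $\Re(h/h^*)$ along $T$ (exactly the remark in the paragraph before Lemma~\ref{equimini}'s successor about orders of equioscillation). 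Once $h$ is determined by the $w_k$, one shows $h$ never vanishes at any $w_k$ (else the ratio is not $\pm1$ there) and that $|h(w)/h^*(w)| = 1$ automatically for every $w\in T$; since $h/h^*$ is continuous on $T\setminus\{\text{zeros}\}$, takes values $\pm1$ alternately at the $2n$ points $w_k$, and maps into the unit circle, on the arc between $w_{2n}$ and $w_1$ containing $w$ it must pass through $i$ (it goes from $-1$ to $+1$, or $+1$ to $-1$, covering a half-circle) — but this only gives *some* point of that arc mapping to $i$, not $w$ itself. So the final, genuinely delicate step is to see that we have one spare real parameter: the freedom of an overall unimodular rotation $h(z)\mapsto h(z)$ composed with $z\mapsto \rho z$ or a common rotation of all $\alpha_j$, equivalently the fact that replacing $h$ by $e^{i\varphi}h$ leaves the conditions at the $w_k$ intact (the ratio $h/h^*$ is invariant under $h\mapsto \lambda h$) while the zero set moves — I would instead exploit that the $2n$ conditions actually determine $h/h^*$ only up to a Möbius degree of freedom, parametrise the one-parameter family of admissible $h$ (e.g. by where the ratio equals $i$ on that last arc), and invoke continuity/monotonicity of that map to hit the prescribed $w$. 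Verifying this monotonicity — that as the free parameter runs over its range, the point of the last arc mapped to $i$ sweeps the whole open arc $(w_{2n},w_1)$ monotonically — is what I expect to be the crux of the argument.
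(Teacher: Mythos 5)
Your proposal does not close the argument, and you say so yourself: the last step --- that the one-parameter family of admissible $h$ can be tuned so that the ratio $h/h^*$ equals $i$ at the \emph{prescribed} point $w$ rather than merely at some point of the arc $(w_{2n},w_1)$ --- is exactly the content of the lemma, and you leave it as an unverified monotonicity claim. There are also two concrete errors on the way there. First, the normalisation $c_n=1$ is not innocent: multiplying $h$ by a scalar $\lambda$ sends $h^*$ to $\bar\lambda h^*$, so $h/h^*$ picks up the factor $\lambda/\bar\lambda$; the ratio is invariant only under \emph{real} scaling, contrary to your claim that $h\mapsto\lambda h$ leaves the conditions intact. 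Consequently the solution set of the $2n$ real-linear homogeneous conditions at $w_1,\dots,w_{2n}$ is a $2$-real-dimensional subspace of the $(2n+2)$-real-dimensional coefficient space (one real projective parameter for the ratio $h/h^*$), and forcing $c_n=1$ may make your affine system inconsistent, since the admissible leading coefficients all have a fixed, generally non-real, argument up to sign. Second, your count ``$2n$ equations in $2n$ unknowns with trivial kernel'' would, if correct, determine $h$ uniquely and leave \emph{no} freedom to meet the condition at $w$; the spare parameter you need is precisely the one your normalisation kills.

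The missing idea is the paper's decomposition $g_1=h+h^*$, $g_2=h-h^*$: the conditions at the $w_k$ say that $g_1$ vanishes at the even-indexed points and $g_2$ at the odd-indexed ones, so one may take $g_1=\alpha\prod(z-w_{2k})$ and $g_2=c\beta\prod(z-w_{2k-1})$ with $|\alpha|=|\beta|=1$ chosen to make $g_1^*=g_1$ and $g_2^*=-g_2$, and $c\in\R\setminus\{0\}$ free. The self-(anti)reciprocity forces, for $z\in T$ off the zero sets, the congruence \eqref{argument}, $\arg g_1(z)\equiv\arg(i\,g_2(z))\equiv\tfrac n2\arg z\pmod\pi$; hence at the fixed point $w$ the numbers $g_1(w)$ and $i\,g_2(w)$ lie on one line through the origin, and a single choice of the real constant $c$ gives $g_1(w)+i\,g_2(w)=0$, i.e.\ $h(w)/h^*(w)=i$. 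This replaces your intermediate-value/monotonicity step by an explicit one-line computation at $w$ itself; without the argument congruence (or an equivalent substitute) your sketch cannot be completed as written.
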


\begin{proof}
Taking $g_1(z)= h(z)+h^*(z)$ and $g_2(z)= h(z)-h^*(z)$, the original problem is
equivalent to finding polynomials $g_1(z)$ and $g_2(z)$ of degree $n$ with the
following properties:
\begin{itemize}
\item[(i)] The zeros of $g_1$ are $(w_{2k})$, where $1 \leq k \leq n$;
\item[(ii)] The zeros of $g_2$ are $(w_{2k-1})$, where $1 \leq k \leq n$;
\item[(iii)] $g_1(z) = g^*_1(z) $
\item[(iv)] $g_2(z) = - g^*_2(z) $
\item[(v)] $g_1(w) + i \, g_2(w)=0$.
\end{itemize}
Property (i) is fulfilled, if $g_1(z)$ has the form
\begin{equation}\label{g1}
g_1(z) =  \alpha \prod_{k=1}^n{(z-w_{2k})},
\end{equation}
where $\alpha$ is a complex number of modulus 1. Proposition~\ref{gg*} implies
that property (iii) is satisfied if the leading coefficient and the constant
term of $g_1(z)$ are conjugates of each other, that is,
\[
\bar \alpha = \alpha (-1)^n \prod w_{2k}.
\]
This is achieved by choosing $\alpha$ such that
\begin{equation*}\label{alphaeq}
\alpha^2 =(-1)^n \prod \overline {w}_{2k}.
\end{equation*}
\noindent
 Similarly, conditions (ii) and (iv) are fulfilled if $g_2(z)$ is defined
by
\[
g_2(z) = c \beta \prod_{k=1}^n{(z-w_{2k-1})},
\]
where $c$ is a non-zero real and $\beta$ is a complex number with $|\beta|=1$
satisfying
\begin{equation*}\label{betaeq}
\beta^2 =(-1)^{n+1} \prod \overline {w}_{2k-1}.
\end{equation*}
For any $z \in T$, by (iii), (iv) and the fact $z^*=z$, \eqref{zz*} implies
that
\[
g_1(z) = z^n \overline{g_1(z)}
\]
and
\[
g_2(z) = - z^n \overline{g_2(z)}.
\]
Thus, if neither $g_1$ nor $g_2$ has a zero at $z$, then
\begin{equation}\label{argument}
\arg g_1(z) \equiv \arg (i\, g_2(z)) \equiv \frac n 2 \arg z \pmod \pi.
\end{equation}
In particular, choosing $z = w$, we obtain that the non-zero real $c$ can be
specified so that property (v) holds.
\end{proof}

Finally, we present the variant of Bernstein's inequality that is needed for
the second proof.

An entire function $f$ is said to be of {\em exponential type $\tau$} if for
any $\varepsilon > 0$ there exists a constant $k(\varepsilon)$  such that $|f
(z)| \leq k(\varepsilon)e^{(\tau + \varepsilon)|z|}$ for all $z \in \C$. The
following inequality \cite{Be}, p. 102, is known as Bernstein's inequality. It
can be viewed as an extention of Bernstein's (trigonometric) polynomial
inequality \cite{BE}, p. 232, to entire functions of exponential type bounded
on the real axis.

\begin{lemma}[Bernstein's inequality] Let $f$ be an entire function of
exponential type $\tau > 0$ bounded on ${\Bbb R}$. Then
$$\sup_{t \in \R}{|f'(t)|} \leq \tau \sup_{t \in \R}{|f(t)|}\,.$$
\end{lemma}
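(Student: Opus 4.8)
The plan is to reduce the statement to the classical Bernstein inequality for trigonometric polynomials via an approximation/normalisation argument, following the standard route (e.g. \cite{Be}). First I would normalise: assume $M := \sup_{t \in \R} |f(t)| < \infty$ is finite (otherwise there is nothing to prove), and after dividing by $M$ assume $|f(t)| \le 1$ on $\R$. The goal becomes $|f'(t_0)| \le \tau$ for an arbitrary fixed $t_0 \in \R$, and by replacing $f(t)$ with $e^{-i\varphi} f(t + t_0)$ for a suitable phase $\varphi$ (which preserves the exponential type and the bound on $\R$) we may assume $t_0 = 0$ and $f'(0) \ge 0$, so that it suffices to show $f'(0) \le \tau$.

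Next I would set up the interpolation identity that is the heart of the argument. The key classical fact is that for an entire function $f$ of exponential type $\tau$ bounded on $\R$, one has the expansion
\[
f'(0) = \frac{\tau}{\pi^2} \sum_{k = -\infty}^{\infty} \frac{(-1)^{k+1}}{\left(k - \tfrac12\right)^2}\, f\!\left(\frac{(k - \tfrac12)\pi}{\tau}\right),
\]
obtained by applying the sampling/interpolation formula at the points $(k-\tfrac12)\pi/\tau$ to the function $f$ (one proves this, e.g., by applying the Poisson-type summation or by a contour-integration argument using that $\cos(\tau z)$ vanishes exactly at these points and $f(z)/\cos(\tau z)$ is controlled at infinity via a Phragmén--Lindelöf estimate). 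Granting this identity, the triangle inequality together with $|f| \le 1$ on $\R$ gives
\[
f'(0) \le \frac{\tau}{\pi^2} \sum_{k=-\infty}^{\infty} \frac{1}{\left(k - \tfrac12\right)^2} = \frac{\tau}{\pi^2} \cdot \pi^2 = \tau,
\]
where the evaluation $\sum_{k} (k - \tfrac12)^{-2} = \pi^2$ is the standard series. This yields the inequality.

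\textbf{Main obstacle.} The delicate point is justifying the interpolation formula itself, i.e.\ controlling the growth of $f(z)/\cos(\tau z)$ on $\C$ and showing the residue series converges and equals $f'(0)$; this requires a Phragmén--Lindelöf argument in the upper and lower half-planes using only the exponential-type bound and boundedness on $\R$, and care that $f$ need not decay on $\R$ (so one works with $f(z)/(\cos(\tau z)\,(z - a)(z-b))$ for auxiliary shift points, or passes through the functions $f(z)e^{\pm i\varepsilon z}$ and lets $\varepsilon \to 0$). An alternative that sidesteps some of this is to prove the polynomial case first and then approximate: for $f$ of exponential type $\tau$ bounded on $\R$, the functions $f_m(t) := f(t)\left(\frac{\sin(\varepsilon_m t / 2)}{\varepsilon_m t/2}\right)$ converge to $f$ locally uniformly with derivatives converging, are of exponential type $\tau + \varepsilon_m$, and decay enough on $\R$ to be uniform limits of trigonometric polynomials of degree $\le (\tau + \varepsilon_m)/(2\pi) \cdot (\text{period})$ on long intervals, to which the trigonometric Bernstein inequality of \cite{BE}, p.~232, applies; letting $m \to \infty$ and $\varepsilon_m \to 0$ recovers the claim. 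I would choose whichever route is cleaner to write, but in either case the analytic heart is the same growth estimate. Since the paper cites \cite{Be}, p.~102, for the statement, a reasonable option is to simply indicate that this is the classical Bernstein inequality and reference that source, giving only the brief normalisation reduction above as the proof sketch.
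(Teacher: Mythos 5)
The paper does not actually prove this lemma: it is quoted as a classical result, with the statement attributed to Bernstein \cite{Be}, p.~102, and an alternative proof (together with the equality characterisation \eqref{e2} used later) referenced in \cite{RS}, pp.~512--514. So your fallback option --- give the normalisation reduction and cite \cite{Be} --- is exactly the level of detail the paper itself adopts, and is perfectly acceptable here.

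As for your main route, it is the standard classical proof and the key identity you write is correct: the interpolation (sampling) formula at the points $(k-\tfrac12)\pi/\tau$ does give
\[
f'(0) = \frac{\tau}{\pi^2}\sum_{k\in\mathbb{Z}}\frac{(-1)^{k+1}}{(k-\tfrac12)^2}\,f\!\left(\frac{(k-\tfrac12)\pi}{\tau}\right),
\]
as one can check against $f(t)=\sin \tau t$ and $f(t)=e^{i\tau t}$, and $\sum_{k\in\mathbb{Z}}(k-\tfrac12)^{-2}=\pi^2$ then yields $|f'(0)|\le\tau$. You are right that the entire analytic content is the justification of this formula (convergence and validity via a Phragm\'en--Lindel\"of estimate on $f(z)/\cos(\tau z)$ with auxiliary factors or the $e^{\pm i\varepsilon z}$ trick), and your proposal leaves that step as a sketch; relative to a self-contained proof this is a genuine gap, but relative to the paper, which proves nothing, it is not. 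One concrete caveat about your alternative route: the factor $\sin(\varepsilon_m t/2)/(\varepsilon_m t/2)$ decays only like $1/t$ on $\R$, which is too weak for the periodisation/approximation-by-trigonometric-polynomials step as you describe it; the usual fix is to use a square (Fej\'er-type) factor $\bigl(\sin(\varepsilon_m t/2)/(\varepsilon_m t/2)\bigr)^2$, adjusting the type to $\tau+\varepsilon_m$, before invoking the trigonometric Bernstein inequality of \cite{BE}, p.~232.
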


The reader may find another proof of the above Bernstein's inequality in
\cite{RS}, pp. 512--514, where it is also shown that an entire function $f$ of
exponential type $\tau$ satisfying
\[
|f'(t_0)| = \tau \sup_{t \in {\R}} |f(t)|
\]
at some point $t_0 \in \R$ is of the form
\begin{equation}\label{e2}
f(z) = ae^{i\tau z} + be^{-i\tau z}\,, \quad a \in \C,  b \in \C, \quad |a| +
|b| = \sup_{t \in {\Bbb R}}{|f(t)|}\,.
\end{equation}

\section{First approach - Equioscillating functions}

\begin{proof}[First proof of the Theorem]
We may assume that $(z_j)_1^n$ is a locally maximal set, and hence it consists
of $n$ different points. Setting
\[ m = 2 \sqrt{M^2(z_1, \dots, z_n)}\,,
\]
the inequality (\ref{planpolineq}), that we wish to prove,  is equivalent to
 $m \leq n$.

 Using that for any $z$ and $z_j$ on $T$,
\begin{equation*}\label{abspol}
|z - z_j|^2 =  - \frac{ (z- z_j)^2}{z \, z_j},
\end{equation*}
we obtain that
\[
\sum_{j=1}^n \frac{1}{|z-z_j|^2} = R^{-1}(z),
\]
where $R(z)$  is the rational function given by
\begin{equation}\label{rationalform}
R(z)  = \frac{\prod_{j=1}^{n}(z-z_j)^2}{-z\sum_{j=1}^{n} z_j \prod_{k \neq j}
(z -z_k)^2}.
\end{equation}

The degrees of the numerator and the denominator of $R(z)$ are $2n$ and at most
$2n - 1$, respectively. The zeroes are $(z_j)_1^n$ with multiplicity 2, and
$R(z)$ assigns real values on the unit circle. Moreover, Lemma~\ref{equimini}
implies that the function
\[
R(z)-\frac{2}{m^2},
\]
which is a rational function as well, oscillates equally between $-2/m^2$ and
$2/m^2$ of order $n$. Let $w_1, \dots, w_{2n}$ be the equioscillation points
such that $w_{2k}= z_k$ for every $k= 1, \dots, n$, and let $w$ be a further
point on $T$ satisfying $R(w) = 2/m^2$. Applying Lemma~\ref{equifunc} yields a
polynomial $h(z)$ of degree $n$, such that
\begin{equation}\label{rateq}
R(z)-\frac{2}{m^2} = \frac{2}{m^2}\, \Re\left(\frac{h(z)}{h^*(z)}\right)
\end{equation}
for every $z= w_1, \dots, w_{2n}, w$. Moreover, both functions assign real
values on $T$, and they have local extrema at the points $(w_j)_1^{2n}$,
therefore their derivatives vanish at these places.

Since $|h(z)|=|h^*(z)|$ on the unit circle,
\[
\frac{2}{m^2}+\frac{2}{m^2}\, \Re\left(\frac{h(z)}{h^*(z)}\right)=
\frac{1}{m^2}\left(2+\frac{h(z)}{h^*(z)}+\frac{h^*(z)}{h(z)}\right)=
\frac{(h(z)+h^*(z))^2}{m^2 \, h(z) h^*(z)}.
\]
Thus, from (\ref{rateq}) we deduce that the rational function
\[
R(z)-\frac{(h(z)+h^*(z))^2}{m^2 \, h(z) h^*(z)}
\]
has double zeroes at all the points $w_1, \dots, w_{2n}$, and it also vanishes
at $w$. On the other hand, its numerator is of degree at most $4n$. Hence, it
must be identically 0, and using~(\ref{rationalform}), we obtain that
\begin{equation}\label{rateq2}
\frac{\prod_{j=1}^{n}(z-z_j)^2}{-z\sum_{j=1}^{n} z_j \prod_{k \neq j} (z
-z_k)^2}=\frac{(h(z)+h^*(z))^2}{m^2 \, h(z) h^*(z)}.
\end{equation}
This equation is the crux of the proof.

 As before, let $g_1(z)= h(z)+h^*(z)$ and $g_2(z)= h(z)-h^*(z)$. Then
by~ \eqref{g1},
\begin{equation}\label{g1z}
g_1(z) =  \alpha \prod_{j=1}^n{(z-z_{j})},
\end{equation}
with a complex number $\alpha$ of norm 1. According to properties (iii) and
(iv),
\begin{equation}\label{g1g2}
\begin{aligned}
g_1(z)&= \alpha z^n + \dots + \bar \alpha, \\
g_2(z)&= \beta  z^n + \dots - \bar \beta,
\end{aligned}
\end{equation}
where now $\beta \in \C \setminus \{ 0 \}$. Substituting $g_1(z)$ and $g_2(z)$,
equation (\ref{rateq2}) transforms to
\begin{equation}\label{rateq3}
\frac{\prod_{j=1}^{n}(z-z_j)^2}{-z\sum_{j=1}^{n} z_j \prod_{k \neq j} (z
-z_k)^2} = \frac{g_1(z)^2}{\frac{m^2}{4} (g_1(z)^2-g_2(z)^2)}\,.
\end{equation}
Since the degree of the denominator on the left hand side is at most $2n-1$,
(\ref{g1g2}) implies that
\begin{equation}\label{leadco}
\alpha = \pm \beta.
\end{equation}
The quotient of the leading coefficients of the numerators on the two sides of
(\ref{rateq3}), which is $\alpha^2$, is the same as the quotient of those of
the denominators. Therefore
\begin{equation*}\label{rateq4}
-\alpha^2 z\sum_{j=1}^{n} z_j \prod_{k \neq j} (z -z_k)^2 = \frac{m^2}{4}
(g_1(z)^2-g_2(z)^2).
\end{equation*}
Let $1\leq j \leq n$ be arbitrary. Substituting $z = z_j$ and taking square
roots yields
\[
\alpha \, z_j \prod_{k \neq j} (z_j -z_k) = \pm \frac{m}{2} \, g_2(z_j),
\]
which, by \eqref{g1z}, is equivalent to
\begin{equation}\label{dereq1}
z_j \, g_1'(z_j)=  \eps_j \frac{m}{2} \, g_2(z_j),
\end{equation}
where $\eps_j = \pm 1$.

\begin{lemma}
For all  $j$ and $k$, $\eps_j = \eps_k$.
\end{lemma}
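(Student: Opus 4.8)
The plan is to work entirely on $T$ and to track the sign of a single real function. By \eqref{argument}, $g_1(z)$ and $i\,g_2(z)$ have the same argument modulo $\pi$ at every $z\in T$ that is not a zero of $g_1g_2$; hence the quotient $t(z):=-i\,g_2(z)/g_1(z)$ is real at every such point, and $t$ extends to a real, real-analytic function on $T$ with the zeros of $g_1$ removed. Using \eqref{rateq3}, equivalently $R(z)=\dfrac{4\,g_1(z)^2}{m^2\,(g_1(z)^2-g_2(z)^2)}$, one rewrites for $z\in T$
\[
\sum_{j=1}^n\frac{1}{|z-z_j|^2}=R(z)^{-1}=\frac{m^2}{4}\Bigl(1-\frac{g_2(z)^2}{g_1(z)^2}\Bigr)=\frac{m^2}{4}\bigl(1+t(z)^2\bigr).
\]
Since $g_1$ and $g_2$ have degree $n$ with all roots on $T$, and these roots are exactly $\{z_1,\dots,z_n\}=\{w_{2k}\}$ and $\{w_{2k-1}\}$ respectively — all $2n$ points distinct and interlacing in the cyclic order $w_1,\dots,w_{2n}$ — the function $t$, viewed on $T$, has a simple pole at each $z_j$ (note $g_2(z_j)\ne 0$), a simple zero at each $w_{2k-1}$, and exactly one zero between any two cyclically consecutive poles.

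First I would compute the local behaviour of $t$ at a pole. Fix $j$ and write $z=z_j e^{i\phi}$. Since $z_j$ is a simple root of $g_1=\alpha\prod_k(z-z_k)$ with $g_1'(z_j)=\alpha\prod_{k\ne j}(z_j-z_k)\ne 0$, we get $g_1(z_j e^{i\phi})=i\,z_j g_1'(z_j)\,\phi+O(\phi^2)$, while $g_2(z_j e^{i\phi})=g_2(z_j)+O(\phi)$. Feeding in \eqref{dereq1} in the form $g_2(z_j)=\eps_j\,\tfrac{2}{m}\,z_j g_1'(z_j)$ yields
\[
t(z_j e^{i\phi})=-i\,\frac{g_2(z_j e^{i\phi})}{g_1(z_j e^{i\phi})}=-\frac{2\eps_j}{m}\cdot\frac1\phi+O(1)\qquad(\phi\to 0).
\]
As $m>0$, this shows that on $T$ the function $t$ has sign $-\eps_j$ immediately counterclockwise of $z_j$ and sign $+\eps_j$ immediately clockwise of $z_j$.

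Then I would propagate this around the circle using the interlacing. Let $z_j$ and $z_{j'}$ be two cyclically consecutive poles, $z_{j'}$ the one following $z_j$ counterclockwise. On the open arc between them $t$ is real and continuous and has exactly one zero, which is a simple zero of $g_2$ and hence an honest sign change of $t$; so $t$ changes sign exactly once across that arc. Matching the sign of $t$ just counterclockwise of $z_j$, namely $-\eps_j$, against the sign of $t$ just clockwise of $z_{j'}$, namely $+\eps_{j'}$, these being the two ends of an arc with a single sign flip, forces $-\eps_j=-\eps_{j'}$, i.e. $\eps_j=\eps_{j'}$. Since the poles of $t$ are precisely $z_1,\dots,z_n$, going once around $T$ gives $\eps_1=\dots=\eps_n$, which is the claim.

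The step I expect to be the crux is the assertion that $t$ changes sign exactly once between two consecutive poles. This rests on two facts about the $g_i$ from Lemma~\ref{equifunc}: the zeros of $g_1$ and $g_2$ are distinct and interlace along $T$ (so each inter-pole arc carries exactly one zero of $g_2$), and these zeros are simple (so the zero of $t$ there is a genuine sign change, not an even-order contact); alternatively, strict convexity of $S_f$ on each arc between consecutive $z_j$'s gives a unique local minimum there, which is precisely the unique zero of $t^2$, hence of $t$. One also has to check that $t$ is really real-valued on $T$ — this is exactly \eqref{argument} — and that $g_2(z_j)\ne 0$, so that $t$ has a genuine simple pole at each $z_j$; both are immediate from the setup.
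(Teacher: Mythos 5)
Your proof is correct and is essentially the paper's own argument in different clothing: the paper tracks $\arg\bigl(g_1(z)/g_2(z)\bigr)$ around $T$, using \eqref{argument} together with the alternation of the zeros of $g_1$ and $g_2$ and the local expansion of $g_1$ near $z_j$ (which produces the $-\pi/2$ shift, i.e.\ your residue computation), while you track the sign of the real function $-i\,g_2/g_1$, which encodes exactly the same information. Both proofs propagate the same local datum at each $z_j$ around the circle via the single sign change (respectively, the two $\pi$-jumps of the argument) between consecutive poles, so no new idea is involved.
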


\begin{proof}
First, for any $j$,
\begin{align*}
\arg g_1'(z_j) &= \lim_{\delta \rightarrow 0+} \left(\arg g_1(z_j e^{i \delta})
- \arg (z_j e^{i \delta} - z_j)\right)\\ &= \lim_{\delta \rightarrow 0+} \arg
g_1(z_j e^{i \delta}) - \arg z_j - \frac{\pi}{2}
\end{align*}
and therefore
\begin{equation}\label{argeq1}
\arg (z_j g_1'(z_j)) = \lim_{\delta \rightarrow 0+} \arg g_1(z_j e^{i \delta})
- \frac{\pi}{2}\,.
\end{equation}
Second, if $z \in T$ with $g_1(z) \neq 0$ and $g_2(z) \neq 0$, then by
\eqref{argument},
\[
\arg \frac {g_1(z)}{g_2(z)}  \equiv  \frac {\pi}{2} \pmod \pi.
\]
 Since $g_1(z)$ and $g_2(z)$ are polynomials with single
zeroes only, their arguments change continuously on $T$ apart from their
zeroes, where a jump of $\pi$ occurs. Observe that the zeroes of $g_1(z)$ and
$g_2(z)$ are alternating on $T$ (as the zeroes of $g_2$ are the local maximum
places of $h / h^*$). Hence,
\[
\lim_{\delta \rightarrow 0+} \arg \frac{g_1(z_j e^{i \delta})}{g_2(z_j e^{i
\delta})}
\]
is the same for every $j$. Now (\ref{argeq1}) yields that
\[ \arg \frac{z_j g_1'(z_j)}{g_2(z_j)}\]
does not depend on $j$ either, and by \eqref{dereq1}, the same is true for
$\eps_j$.
\end{proof}

Let $\eps_j = \eps= \pm 1$. From (\ref{dereq1}), we conclude that  the
polynomial
\[
z \, g_1'(z) - \eps \frac{m}{2} \, g_2(z)
\]
of degree $n$ attains 0 at all $(z_j)_1^n$, and hence its zeroes agree with
those of $g_1(z)$. Therefore there exists a complex number $\gamma$, such that
\[
z \, g_1'(z) - \eps \frac{m}{2} \, g_2(z)  = \gamma \, g_1(z),
\]
and thus
\begin{equation}\label{g1g1'}
 \eps \frac{m}{2} \, g_2(z)  = z \, g_1'(z) - \gamma \, g_1(z).
\end{equation}
Equating the leading coefficients, referring to (\ref{g1g2}), gives
\begin{equation}\label{mgamma}
\eps \frac{m}{2} \, \beta = (n - \gamma) \alpha,
\end{equation}
which, with the aid of (\ref{leadco}), yields that $\gamma \in \R$.

Finally, by comparing the leading coefficients and the constant terms in
(\ref{g1g1'}) and using the form (\ref{g1g2}), we deduce that $ (n-
\gamma)\alpha = \gamma \alpha $ and, since $\alpha \neq 0$,
\[
\gamma = \frac{n}{2}\,.
\]
Taking absolute values in (\ref{mgamma}) and referring to (\ref{leadco}), we
arrive at~$m = n$, which proves \eqref{planpolineq}.

Note that the proof gives more than the desired inequality: it shows that every
locally maximal set is actually a maximal set.

Next, we have to show that a set is locally extremal if and only if it is
equally distributed. First, assume that for some $c \in T$, $z_j^n = c$ for
every $j = 1,2,\dots,n$. Choosing $z_0$ to be the midpoint of the smaller arc
between two consecutive $z_j$'s, the sharpness of \eqref{planpolineq} follows
from setting $t = \pi / n$ in the identity
\begin{equation}\label{cosform}
 \sum_{j=1}^n \sin^{-2} \left(\frac{t}{2}-\frac{ j \pi}{n} \right)
  = \frac{2 n^2}{1-\cos n t}\;,
\end{equation}
which can be proved using Fej\'er kernels or Chebyshev polynomials; it also
follows from \eqref{e4} by setting
\[
Q(t) =  \sin  \frac{n t} 2  = (-1)^n\, 2^{n-1}\,\prod_{j=1}^n \sin
\left(\frac{t}{2}-\frac{ j \pi}{n}\right).
\]

 Finally, we prove that any locally extremal set is equally distributed,
based on an idea of L. Fejes T\'oth \cite{FT}. Let $(z_j)_1^n$ be a maximal set
with $z_j = e^{i t_j}$, and let $M^2(z_1, \dots, z_n)$ be attained at the
points $e^{i s_j}$, $j= 1, \dots, n$. Assume that $0\leq \theta_1< s_1< t_2 <
s_2 < \dots < t_n < s_n< 2 \pi$. Then, by Lemma~\ref{equimini},
\begin{equation}\label{sumjk}
\sum_{j=1}^n \sum_{k=1}^n \sin^{-2}\left(\frac{s_j - t_k} {2} \right) = n^3.
\end{equation}
For the sake of simplicity, the indices of the $t_j$'s will be understood
cyclically, i.e. $t_k = t_j$ for $j \equiv k \mod n$. Then, by Jensen's
inequality and \eqref{cosform},
\begin{align*}
 2 \sum_{k=1}^n &\sum_{j=1}^n \sin^{-2}\left( \frac{s_j -
t_k}{2}\right) \\&= \sum_{k=1}^n \sum_{j=1}^n \sin^{-2}\left( \frac{s_j -
t_{j+k}}{2} \right) +
\sin^{-2}\left(\frac{s_j - t_{j-k+1}}{2}\right)\\
&\geq  \sum_{k=1}^n 2 n \sin^{-2} \left( \frac{(2k -1)\pi}{2 n} \right) = 2
n^3.
\end{align*}
Thus, by \eqref{sumjk}, equality holds in Jensen's inequality at all instances.
Hence, the strict convexity of $\sin^{-2}(t/2)$ implies that $s_j- t_j= t_{j+1}
- s_j = \pi/(2n)$ for every $j$.
\end{proof}

We remark that starting from an arbitrary point set $(z_j)_1^n \subset T$,
defining $g_1(z)$ by \eqref{g1z}, and taking $m=n$ and $\gamma= n/2$, the
function $g_2(z)$ given by \eqref{g1g1'} has its zeroes where the modulus of
$g_1(z)$ is locally maximal. Thus, by \eqref{rateq3}, the proof implicitly
shows that in the extremal cases, $\sum |z-z_j|^{-2}$ and $\prod |z -
z_j|^{-1}$ have the same local minimum places on $T$.

\section{Second approach - Derivatives}

\begin{proof}[Second proof of the Theorem]
Associated with $z_j \in T$ we write $z_j = e^{it_j}$, $t_j \in [0,2\pi)$,
$j=1,2,\ldots, n$. We define
\begin{equation} \label{e3}
Q(t) = \prod_{j=1}^n{\sin \frac{t-t_j}{2}}\,.
\end{equation}
Then
\[
\frac{Q'(t)}{Q(t)} = \frac 12 \sum_{j=1}^n {\cot {\frac{t-t_j}{2}}}
\]
and
\begin{equation}\label{e4}
\begin{split}
\frac{Q''(t)Q(t)  - (Q'(t))^2}{(Q(t))^2} &= \left( \frac{Q'(t)}{Q(t)} \right)'\\
&\hspace{-0.5 cm}= -\frac 14 \sum_{j=1}^n{\csc^2{\frac {t-t_j}{2}}} = -\frac 14
\sum_{j=1}^n{\sin^{-2}{\frac {t-t_j}{2}}}\,.
\end{split}
\end{equation}
Observe that $Q$ and $Q'$ are entire functions of type $n/2$ (in fact they are
trigonometric polynomials of degree $n/2$ if $n$ is even), so by Bernstein's
inequality we have
\[
\max_{t \in {\Bbb R}}{|Q'(t)|} \leq \frac n2 \max_{t \in {\Bbb R}}{|Q(t)|}
\]
and
\begin{equation}\label{e5}
\max_{t \in {\Bbb R}}{|Q''(t)|} \leq \left( \frac n2 \right)^2 \max_{t \in
{\Bbb R}} {|Q(t)|}\,.
\end{equation}
Let $t_0 \in {\Bbb R}$ be chosen so that
\[
|Q(t_0)| = \max_{t \in \R}{|Q(t)|}\,.
\]
Then $Q'(t_0) = 0\,.$ Hence combining \eqref{e4} and \eqref{e5} we obtain
\[
\frac 14 \sum_{j=1}^n{\sin^{-2}{\frac {t_0-t_j}{2}}} =
\frac{|Q^{\prime\prime}(t_0)|}{|Q(t_0)|} \leq \frac{n^2}{4}\,.
\]
Introducing $z_0 := e^{it_0}$, we arrive at the desired inequality:
\[\sum_{j=1}^n{\frac{1}{|z_0-z_j|^2}} = \frac 14 \sum_{j=1}^n{\sin^{-2}{\frac {t_0-t_j}{2}}}
\leq \frac{n^2}{4}\,.
\]

Suppose now that the inequality \eqref{planpolineq} is sharp. Then equality
holds in Bernstein's inequality for $Q$, that is, $Q$ is of the form \eqref{e2}
with $\tau = n/2$. Here $a \neq 0$ otherwise $|Q(t)| = |b|$ identically for $t
\in {\Bbb R}$, a contradiction. Then the zeros of $Q(z)$ satisfy $e^{inz} =
-b/a$. Since the zeros of $Q$ are real, we have $|-b/a| = 1$, and each $z_j =
e^{it_j}$ satisfies the equation $z^n = -b/a$. Obviously the zeros $t_j$,
$j=1,2,\ldots, n$, of $Q$ on the period are distinct, hence $z_j = e^{it_j}$,
$j=1,2,\ldots, n$, are also distinct.

Now suppose that the numbers $z_j = e^{it_j}$ are distinct and there is a
number $c \in T$ such that $z_j^n = c$ for all $j=1,2,\dots,n$. Then $Q$ is of
the form \eqref{e2} with $\tau = n/2$. Choosing $z_0 = e^{it_0}$ to be the
midpoint of the smaller arc between two consecutive points $z_j$ on $T$, we
obtain $Q'(t_0) = 0$. Hence  \eqref{e4} implies that
\[
\frac 14 \sum_{j=1}^n{\sin^{-2}{\frac {t_0-t_j}{2}}} =
\sum_{j=1}^n{\frac{1}{|z_0-z_j|^2}} = \frac{|Q^{\prime\prime}(t_0)|}{|Q(t_0)|}
= \frac{n^2}{4}\,,
\]
where in the last equality we used that $Q$ is of the form \eqref{e2}.
\end{proof}

\section{Acknowledgements}
The authors thank I. B\'ar\'any for fruitful discussions,  K. Stolarsky for citing
various references, and Sz. Gy. R\'ev\'esz for the numerous suggestions and remarks
that improved the presentation of our results.

\end{document}